\newtheorem{Theorem}{Theorem}[section]
\newtheorem{Lemma}[Theorem]{Lemma}
\newtheorem{Claim}[Theorem]{Claim}
\newtheorem{Corollary}[Theorem]{Corollary}
\newtheorem{Definition-Proposition}[Theorem]{Definition-Theorem}
\newtheorem{Main Conjecture}[Theorem]{Main Conjecture}
\theoremstyle{remark}
\newtheorem{Example}[Theorem]{Example}
\newcommand{\id}{\mathrm{id}}
\newcommand\iso{{\cong}}
\theoremstyle{plain}
\newcommand{\cellsize}{19}
\newlength{\cellsz} \setlength{\cellsz}{\cellsize\unitlength}
\newsavebox{\cell}
\sbox{\cell}{\begin{picture}(\cellsize,\cellsize)
\put(0,0){\line(1,0){\cellsize}}
\put(0,0){\line(0,1){\cellsize}}
\put(\cellsize,0){\line(0,1){\cellsize}}
\put(0,\cellsize){\line(1,0){\cellsize}}
\end{picture}}
\newcommand\cellify[1]{\def\thearg{#1}\def\nothing{}%
\ifx\thearg\nothing
\vrule width0pt height\cellsz depth0pt\else
\hbox to 0pt{\usebox{\cell} \hss}\fi%
\vbox to \cellsz{
\vss
\hbox to \cellsz{\hss$#1$\hss}
\vss}}
\newcommand\tableau[1]{\vtop{\let\\\cr
\baselineskip -16000pt \lineskiplimit 16000pt \lineskip 0pt
\ialign{&\cellify{##}\cr#1\crcr}}}
\newcommand{\kellsize}{24}
\newlength{\kellsz} \setlength{\kellsz}{\kellsize\unitlength}
\newsavebox{\kell}
\sbox{\kell}{\begin{picture}(\kellsize,\kellsize)
\put(0,0){\line(1,0){\kellsize}}
\put(0,0){\line(0,1){\kellsize}}
\put(\kellsize,0){\line(0,1){\kellsize}}
\put(0,\kellsize){\line(1,0){\kellsize}}
\end{picture}}
\newcommand\kellify[1]{\def\thearg{#1}\def\nothing{}%
\ifx\thearg\nothing
\vrule width0pt height\kellsz depth0pt\else
\hbox to 0pt{\usebox{\kell} \hss}\fi%
\vbox to \kellsz{
\vss
\hbox to \kellsz{\hss$#1$\hss}
\vss}}
\newcommand\ktableau[1]{\vtop{\let\\\cr
\baselineskip -16000pt \lineskiplimit 16000pt \lineskip 0pt
\ialign{&\kellify{##}\cr#1\crcr}}}
\newcommand{\sellsize}{63}
\newlength{\sellsz} \setlength{\sellsz}{\sellsize\unitlength}
\newsavebox{\sell}
\sbox{\sell}{\begin{picture}(\sellsize,20)
\put(0,0){\line(1,0){\sellsize}}
\put(0,0){\line(0,1){\sellsize}}
\put(\sellsize,0){\line(0,1){\sellsize}}
\put(0,\sellsize){\line(1,0){\sellsize}}
\end{picture}}
\newcommand\sellify[1]{\def\thearg{#1}\def\nothing{}%
\ifx\thearg\nothing
\vrule width0pt height\sellsz depth0pt\else
\hbox to 0pt{\usebox{\sell} \hss}\fi%
\vbox to \sellsz{
\vss
\hbox to \sellsz{\hss$#1$\hss}
\vss}}
\newcommand\stableau[1]{\vtop{\let\\\cr
\baselineskip -16000pt \lineskiplimit 16000pt \lineskip 0pt
\ialign{&\sellify{##}\cr#1\crcr}}}
\begin{document}
\pagestyle{plain}
\title{Singularities of Richardson varieties}

\author{Allen Knutson}
\address{Department of Mathematics\\
Cornell University\\
Ithica, NY 14853-4201}
\email{allenk@math.cornell.edu}

\author{Alexander Woo}
\address{Department of Mathematics\\
University of Idaho\\
Moscow, ID 83844-1103 }
\email{awoo@uidaho.edu}

\author{Alexander Yong}
\address{Department of Mathematics\\
University of Illinois at Urbana-Champaign\\
Urbana, IL 61801}
\email{ayong@illinois.edu}

\subjclass[2000]{14M15, 14N15}

\keywords{Richardson varieties, Schubert varieties, singularities}

\date{September 14, 2012}

\maketitle

\section{Introduction and the main result}

This paper gives a short proof that essentially all questions concerning singularities of Richardson varieties
reduce to corresponding questions about Schubert varieties.  Consequently,
one quickly deduces new and previously known results.

Let $G$ denote a simple reductive linear algebraic group over an algebraically closed field $\Bbbk$. Fix a choice of
Borel and opposite Borel subgroups $B$ and $B_{-}$, and let $T=B\cap B_{-}$ be the associated maximal torus.
A {\bf generalized flag variety} is a variety $G/P$, where $P\supseteq B$ is a parabolic subgroup.
A $B$-orbit closure $X_{w}:={\overline{BwP/P}}\subseteq G/P$ is known as a {\bf Schubert variety}. Here
$w$ is an element of the Weyl group $W\cong N(T)/T$ of $G$.  If we let $W_P$ be the parabolic
subgroup of $W$ associated to $P$, the choice of a different coset representative in $wW_P$ gives the same Schubert variety.

There has been significant interest in invariants of singularities of
Schubert varieties; two surveys include \cite{Billey.Lakshmibai,
  Brion}.  More generally, one wishes to similarly understand
singularities of other special subvarieties $X$ of various $G/P$.  Two
examples of such subvarieties $X$ are $K$-orbit closures (where $K$ is
a symmetric subgroup of $G$) and Peterson varieties.  Combinatorics
important to the study of Schubert varieties shows up in these cases;
for example, permutation pattern avoidance appears in both the study
of $K$-orbits~\cite{McGovern} and Peterson
varieties~\cite{Insko.Yong}.

This paper concerns {\bf Richardson varieties},
which are the varieties $X_w^v:=X_w\cap X^v$, where
$X^v=\overline{B_{-}vP/P}$ is an {\bf opposite Schubert variety}.
When $v\leq w$ in Bruhat order, $X_w^v$ is nonempty, reduced
and irreducible \cite{Richardson}.  Richardson varieties play a
prominent role in Schubert calculus, as their classes in the
cohomology ring of $G/P$ satisfy $[X_{w}^v]=[X_w]\cdot[X_{w_ov}]$.
Hence one can study the product of Schubert classes and more specifically the {\bf generalized Littlewood-Richardson coefficients} $C_{v,u}^w$ defined by
$[X_u]\cdot[X_v]=\sum_{w} C_{v,u}^w[X_w]$ by studying Richardson varieties. Several recent papers have
studied the singularities of Richardson varieties using various
techniques, including standard monomial theory
\cite{Kreiman.Lakshmibai}, intersection theory \cite{Billey.Coskun,
  Balan}, and Frobenius splitting \cite{Knutson.Lam.Speyer}. Indeed,
\cite{Billey.Coskun, Knutson.Lam.Speyer} study a generalization known as the
\emph{projected Richardson varieties}.

Our main result states that, given any point $x\in X_w^v$, the local
properties of $X_w^v$ at $x$ are completely determined by the Schubert
cell $X_\sigma^\circ$ and opposite Schubert cell $X^\tau_\circ$ to
which $x$ belongs.  Furthermore, for any invariant or property whose
behavior under product of varieties is understood, its behavior on $X_w^v$
at $x$ can be calculated from its behavior on $X_w$ at points of
$X^\tau_\circ$ and its behavior on $X^v$ at points of
$X_\sigma^\circ$.  More precisely:

\begin{Theorem}
\label{thm:main}
Let ${\mathcal P}$ be a local invariant of varieties that is preserved
under products with affine space.  Suppose there exists a function
$f_{\mathcal P}$ such that, for any varieties $X$ and
$Y$,
\begin{equation}
\label{eqn:factor}
{\mathcal P}(x\times y, X\times Y)=f_{\mathcal P}({\mathcal P}(x,X),{\mathcal P}(y,Y)),
\mbox{ \ \ \ \  for any $x\in X$, $y\in Y$.}
\end{equation}
Let $x P\in X^v_w$, and suppose $x P\in (B\sigma P/P)\cap (B_{-}\tau P/P)$
for some $\sigma,\tau\in W$.  Then
\[{\mathcal P}(x P,X^v_w)=f_{\mathcal P}({\mathcal P}(\sigma P,X_w),{\mathcal P}(\tau P, X^v))
=f_{\mathcal P}({\mathcal P}(xP,X_w),{\mathcal P}(xP, X^v)).\]
\end{Theorem}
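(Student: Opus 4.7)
The theorem contains two equalities, both expressing the product structure of the Richardson variety's singularity. The second equality, $\mathcal{P}(\sigma P, X_w) = \mathcal{P}(xP, X_w)$ and $\mathcal{P}(\tau P, X^v) = \mathcal{P}(xP, X^v)$, follows immediately from group action: since $xP$ and $\sigma P$ lie in the same $B$-orbit $B\sigma P/P$ and $X_w$ is $B$-stable, left multiplication by any $b \in B$ with $b\cdot \sigma P = xP$ is an automorphism of the pair $(G/P, X_w)$ sending $\sigma P$ to $xP$; as $\mathcal{P}$ is a local invariant this forces equality.  The $B_-$-action handles $X^v$ dually.

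For the first equality I plan to construct a local product structure on $G/P$ at $xP$ respected by both $X_w$ and $X^v$. At $xP$ the Schubert and opposite Schubert cells meet transversally: $T_{xP}(B\sigma P/P) + T_{xP}(B_-\tau P/P) = T_{xP}(G/P)$, with intersection the tangent $T_{xP} R$ to the Richardson cell $R := B\sigma P/P \cap B_-\tau P/P$. Choose a slice $S^+$ through $xP$ transverse to $B\sigma P/P$ and a slice $S^-$ through $xP$ transverse to $B_-\tau P/P$. The $B$-invariance of $X_w$ yields cylindricity along $B$-orbits, so on a neighborhood $V$ of $xP$ there is a smooth projection $\pi_+ : V \to S^+$ with $\pi_+^{-1}(X_w \cap S^+) = X_w \cap V$; dually, $X^v \cap V = \pi_-^{-1}(X^v \cap S^-)$.

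The combined map $(\pi_+, \pi_-) : V \to S^+ \times S^-$ is smooth and surjective near $xP$, with fiber locally equal to $R$. I will construct the slices explicitly as orbits of unipotent subgroups, taking $S^+ = U^{-,\sharp} \cdot xP$ where $U^{-,\sharp}$ is a subgroup of the unipotent radical $U^-$ of $B_-$ whose tangent is a complement of $T_{xP} R$ inside $T_{xP}(B_-\tau P/P)$, and $S^- = U^\sharp \cdot xP$ analogously with $U^\sharp$ in the unipotent radical $U$ of $B$. This choice trivializes the bundle, giving an isomorphism $V \cong R \times S^+ \times S^-$ under which $X_w \cap V = R \times (X_w \cap S^+) \times S^-$ and $X^v \cap V = R \times S^+ \times (X^v \cap S^-)$; the $R$-factor is common to both since $T_{xP} R$ is contained in both $T_{xP}(B\sigma P/P) \subseteq T_{xP} X_w$ and $T_{xP}(B_-\tau P/P) \subseteq T_{xP} X^v$. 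Intersecting gives $X_w^v \cap V = R \times (X_w \cap S^+) \times (X^v \cap S^-)$; iterating the product formula \eqref{eqn:factor} and absorbing the affine factor $R$ and the smooth $S^\pm$ complements via preservation under products with affine space produces $\mathcal{P}(xP, X_w^v) = f_\mathcal{P}(\mathcal{P}(xP, X_w), \mathcal{P}(xP, X^v))$.

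The main obstacle is this simultaneous trivialization: the $B$-action naturally gives cylindricity for $X_w$ but not $X^v$, and the $B_-$-action does the reverse, so the Richardson-cell direction must serve as a shared axis in a single product decomposition of $V$. Verifying by direct computation that the explicit unipotent-subgroup construction of $S^+$ and $S^-$ realizes both cylindricities at once---in particular that $X^v$ remains cylindrical along the $R$-factor even though that factor can equivalently be parametrized from the $U$-side---is the central technical point.
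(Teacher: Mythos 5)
Your handling of the second equality (moving $xP$ to $\sigma P$ within $X_w$ by the $B$-action, and dually for $X^v$) is correct and matches the paper. But the first equality rests entirely on the ``simultaneous trivialization'' $V \cong R \times S^+ \times S^-$, which you yourself flag as the unverified ``central technical point'' --- and that is exactly where all the content of the theorem lives. The paper's proof of the corresponding statement (Lemma~\ref{lemma:key}, via Lemma~\ref{lemma:unipotent}) is not a routine verification: writing $U = uU_-u^{-1}$ as a product of $U_1 = U_-\cap uU_-u^{-1}$ and $U_2 = U_+\cap uU_-u^{-1}$, the two product maps $(u_1,u_2)\mapsto u_1u_2$ and $(u_1,u_2)\mapsto u_2u_1$ each give only \emph{one} of the two required compatibilities, and one must splice together $\kappa_1^{-1}$ and $\pi_2^{-1}$ and then reprove that the spliced map is an isomorphism (induction on $\dim U$ through the center, plus a $T$-equivariant differential argument). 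Nothing in your sketch explains why a single chart can be made cylindrical for $X_w$ and $X^v$ at once; asserting that the slices are unipotent-subgroup orbits does not do this, particularly since $xP$ is not a $T$-fixed point, so $T_{xP}R$ need not be spanned by root directions and the requisite $T$-stable subgroups need not exist at $xP$.

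There is a second, independent gap even if the decomposition is granted. Your common factor $R = B\sigma P/P\cap B_-\tau P/P$ is an open Richardson variety; it is smooth but not isomorphic to affine space in general (already in $G/B=\mathbb{P}^1$ it can be $\mathbb{G}_m$), so the hypothesis that $\mathcal{P}$ is preserved under products with affine space does not let you ``absorb'' it. Worse, $R$ appears once in $X_w^v\cap V \cong R\times(X_w\cap S^+)\times(X^v\cap S^-)$ but contributes to both $\mathcal{P}(xP,X_w)$ and $\mathcal{P}(xP,X^v)$, so your final identity needs $\mathcal{P}(xP,R)$ to act as a neutral element for $f_{\mathcal P}$, which is not among the hypotheses. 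The paper sidesteps both problems by centering the decomposition at the $T$-fixed point $uB$ of the ambient chart: there the chart splits as $X_\circ^u\times X_u^\circ$, a product of two affine spaces meeting only at $uB$, with no shared factor; the passage back to $xP$, $\sigma P$, $\tau P$ is then handled by tracking which cells $\eta_1(xP)$ and $\eta_2(xP)$ lie in and invoking the Kazhdan--Lusztig product structure of $X_w\cap uX_\circ^{\id}$. To repair your argument you would need either to carry out the trivialization at $xP$ honestly and add a hypothesis making $R$ harmless, or to recenter at a $T$-fixed point as the paper does.
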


Our proof gives a local isomorphism, up to a product with affine space,
 between a neighborhood of a point $xP\in X_w^v$ and a product of
local charts on $X_w$ and $X^v$.  This isomorphism is $T$-equivariant. Furthermore, the reader can see from the proof 
that our theorem applies not only to ($T$-equivariant) intrinsic
invariants, but also to local invariants which are relative to the embedding of $X_w^v$
in $G/P$ (for example local $T$-equivariant cohomology classes).

We emphasize that in Theorem~\ref{thm:main}, $xP$ need not be a
$T$-fixed point. For Schubert varieties, the elements of $B$ provide
local isomorphisms between any point $xP\in X_w$ and a $T$-fixed
point; thus one can assume $xP\in (G/P)^T$ if one is interested in
local properties.  However, in general the largest subgroup of $G$ that fixes
$X_w^v$ is $T$, which is not large enough to put every point in the
orbit of a $T$-fixed point.

Call an invariant ${\mathcal P}$ {\bf factorizable} with respect to
$f_{\mathcal P}$ if it satisfies (\ref{eqn:factor}).  Almost all
interesting local invariants are factorizable.  We now mention three
important previously studied special cases of our theorem.

\begin{Example}
\label{exa:Pissmooth}
${\mathcal P}$=``is smooth'' is factorizable with respect to $f_{\mathcal P}=$``logical {\tt and}''.\qed
\end{Example}

Let ${\rm Singlocus}(X)$ denote the singular locus of a variety $X$.
The following was first stated in \cite[Corollary 2.9]{Billey.Coskun}.
 Their proof invokes
Kleiman's transversality theorem; it is also immediate from Theorem~\ref{thm:main} combined with
Example~\ref{exa:Pissmooth}:
\begin{Corollary}
\label{cor:Pissmooth}
${\rm Singlocus}(X_{w}^v)=({\rm Singlocus}(X_w)\cap X^v)\cup (X_w\cap {\rm Singlocus}(X^v))$.
\end{Corollary}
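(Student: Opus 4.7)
The plan is to instantiate Theorem~\ref{thm:main} with ${\mathcal P} = $ ``is smooth,'' as recorded in Example~\ref{exa:Pissmooth}. This invariant is factorizable with $f_{\mathcal P}$ equal to logical {\tt and}, since a product $X\times Y$ is smooth at $(x,y)$ if and only if $X$ is smooth at $x$ and $Y$ is smooth at $y$. Applied to an arbitrary $xP\in X^v_w$, the theorem then yields that $X^v_w$ is smooth at $xP$ exactly when both $X_w$ and $X^v$ are smooth at $xP$. Negating, $xP$ is a singular point of $X^v_w$ precisely when $xP\in X^v_w$ and $xP$ is a singular point of at least one of $X_w,X^v$.

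The remaining step is to rewrite this pointwise characterization as the displayed set equality. Since ${\rm Singlocus}(X_w)\subseteq X_w$, any point in ${\rm Singlocus}(X_w)$ that also lies in $X^v$ automatically lies in $X^v_w = X_w\cap X^v$, and symmetrically for $X^v$. Thus the contribution to ${\rm Singlocus}(X^v_w)$ arising from singularities of $X_w$ is exactly ${\rm Singlocus}(X_w)\cap X^v$, while the contribution from singularities of $X^v$ is $X_w\cap {\rm Singlocus}(X^v)$; their union recovers the right-hand side of the corollary. I expect no real obstacle: once Theorem~\ref{thm:main} is in hand, the content here is purely the bookkeeping observation that the intersection with $X^v_w$ automatically follows from either of the two singular-locus conditions, so the corollary is a direct unpacking of the smoothness case.
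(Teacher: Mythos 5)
Your proposal is correct and is exactly the paper's argument: the corollary is deduced by applying Theorem~\ref{thm:main} with ${\mathcal P}=$``is smooth'' and $f_{\mathcal P}=$``logical {\tt and}'' (Example~\ref{exa:Pissmooth}), followed by the same set-theoretic unpacking. The only difference is that the paper also notes the result was first proved in \cite{Billey.Coskun} via Kleiman transversality, but its own proof is the one you give.
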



\begin{Example}
\label{exa:PisCM}
${\mathcal P}$=``is normal and Cohen--Macaulay with rational singularities'' is also factorizable with respect to $f_{\mathcal P}=$``logical {\tt and}''.\qed
\end{Example}

It is well known that every Schubert variety is normal and Cohen--Macaulay with rational singularities; see~\cite{Brion} for proofs and historical remarks.  The following was originally proved in~\cite{Brion.paper} in characteristic zero
and~\cite[Appendix A]{Knutson.Lam.Speyer} in general.  Their proofs use a generalization of the Bott--Samelson resolution and, in the latter case, the Frobenius splitting of $G/P$.

\begin{Corollary}
The Richardson variety $X_w^v$ is Cohen-Macaulay and normal with rational singularities.
\end{Corollary}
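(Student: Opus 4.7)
The plan is to apply Theorem~\ref{thm:main} directly with $\mathcal{P}$ equal to the property recorded in Example~\ref{exa:PisCM}, namely ``is normal and Cohen--Macaulay with rational singularities'', which is factorizable under $f_{\mathcal{P}} = $ logical \texttt{and}. Given any point $xP \in X_w^v$, choose $\sigma, \tau \in W$ so that $xP \in (B\sigma P/P) \cap (B_{-}\tau P/P)$. Theorem~\ref{thm:main} then yields
\[
\mathcal{P}(xP, X_w^v) \;=\; \mathcal{P}(\sigma P, X_w) \;\wedge\; \mathcal{P}(\tau P, X^v),
\]
so the corollary will follow as soon as I know that $\mathcal{P}$ holds at every point of every Schubert variety and every opposite Schubert variety.

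On the Schubert variety side, I would invoke the classical fact, surveyed in~\cite{Brion}, that every $X_w \subseteq G/P$ is normal and Cohen--Macaulay with rational singularities. The analogous statement for opposite Schubert varieties $X^v$ follows by applying the involution of $G/P$ induced by the longest element $w_0 \in W$, which interchanges $B$ and $B_{-}$. Hence both factors on the right-hand side of the displayed equation are true, so $\mathcal{P}(xP, X_w^v)$ is true. Since $xP$ was arbitrary and each of normality, Cohen--Macaulayness, and rational singularities is a local property, all three hold globally on $X_w^v$.

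The only piece of work not already provided by Theorem~\ref{thm:main} is the verification of Example~\ref{exa:PisCM} itself: that this conjunction of properties is factorizable in the sense of~(\ref{eqn:factor}) and preserved under products with affine space. Normality and Cohen--Macaulayness pass to and from products via tensor products of local rings together with additivity of depth and dimension; for rational singularities one uses that the product of resolutions resolves the product, combined with the Künneth formula applied to the higher direct images. I expect this verification (together with pinning down the appropriate notion of rational singularities in positive characteristic) to be the only real content of the argument; once it is in hand, the corollary is a one-line consequence of Theorem~\ref{thm:main} and the known regularity of Schubert varieties.
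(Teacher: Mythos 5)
Your proposal is correct and follows exactly the route the paper takes: the corollary is an immediate application of Theorem~\ref{thm:main} with the factorizable property of Example~\ref{exa:PisCM}, combined with the classical fact that all Schubert (and, by the $w_0$-symmetry, opposite Schubert) varieties are normal and Cohen--Macaulay with rational singularities. Your closing observation that the only substantive content is verifying the factorization and affine-invariance of this property matches the paper, which likewise treats that verification as the content of Example~\ref{exa:PisCM} and deduces the corollary in one line.
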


\begin{Example}
 Consider the ${\mathbb Z}$-graded Hilbert series of ${\rm gr}_{{\mathfrak m}_p}{\mathcal O}_{p,Z}$, the associated graded ring of the local ring ${\mathcal O}_{p,Z}$, which is denoted ${\rm Hilb}({\rm gr}_{{\mathfrak m}_p}{\mathcal O}_{p,Z},q)$.
The {\bf $H$-polynomial} $H_{p,Z}(q)$ is defined by
     \[{\rm Hilb}({\rm gr}_{{\mathfrak m}_p}{\mathcal O}_{p,Z},q)=\frac{H_{p,Z}(q)}{(1-q)^{\dim Z}},\]
     and $H_{p,Z}(1)$ is the {\bf Hilbert-Samuel multiplicity} ${\rm mult}_{p,Z}$.
Let ${\mathcal P}$ be either
$H_{p,Z}(q)$ or ${\rm mult}_{p,Z}$, and $f_{\mathcal P}$=``(usual) multiplication''.
Since ``taking associated graded commutes with tensor product,''
${\mathcal P}$ is factorizable with respect to $f_{\mathcal P}$.
\qed
\end{Example}

     \begin{Corollary}
     \label{Cor:Hfactors}
     Let $xP\in X_{w}^v$. Then $H_{xP,X_{w}^v}(q)=H_{xP,X_w}(q)\cdot H_{xP,X^v}(q)$.
     \end{Corollary}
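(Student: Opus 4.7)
The plan is to apply Theorem~\ref{thm:main} directly, with ${\mathcal P} = H_{p,Z}(q)$ and $f_{\mathcal P}$ being ordinary multiplication of polynomials in $q$. Two hypotheses need to be verified: that the $H$-polynomial is preserved under products with affine space, and that it satisfies the factorization identity~(\ref{eqn:factor}). The preceding Example asserts both; what remains is to justify these assertions carefully.

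For the first hypothesis, I would observe that $\widehat{{\mathcal O}}_{(p,0),\, Z \times {\mathbb A}^n} \cong \widehat{{\mathcal O}}_{p,Z}[[t_1, \ldots, t_n]]$, so the associated graded ring picks up a polynomial factor $\Bbbk[t_1, \ldots, t_n]$ of dimension $n$. This multiplies the Hilbert series by $(1-q)^{-n}$, while $\dim(Z \times {\mathbb A}^n) = \dim Z + n$, and the two changes cancel in the definition of the $H$-polynomial. For the second hypothesis, the key input is that $\widehat{{\mathcal O}}_{(x,y),\, X \times Y} \cong \widehat{{\mathcal O}}_{x,X} \mathbin{\widehat{\otimes}} \widehat{{\mathcal O}}_{y,Y}$, with maximal ideal given by the sum of the two extended maximal ideals. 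Taking associated graded commutes with this tensor product, yielding ${\rm gr}\, {\mathcal O}_{(x,y),\, X \times Y} \cong {\rm gr}\, {\mathcal O}_{x,X} \otimes {\rm gr}\, {\mathcal O}_{y,Y}$ as graded $\Bbbk$-algebras. Since Hilbert series are multiplicative under tensor products of graded algebras, and dimensions add, the $H$-polynomials multiply.

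With both hypotheses in hand, Theorem~\ref{thm:main} instantly produces $H_{xP, X_w^v}(q) = H_{xP, X_w}(q) \cdot H_{xP, X^v}(q)$. The main subtlety is the second hypothesis: one must check that the natural map ${\rm gr}\, A \otimes {\rm gr}\, B \to {\rm gr}(A \otimes B)$ is an isomorphism of graded rings when $A$ and $B$ are local Noetherian $\Bbbk$-algebras filtered by their maximal ideals and the right-hand side is filtered by the sum-of-maximal-ideals. This is a standard commutative-algebra fact, but worth stating precisely since it is the only nontrivial ingredient beyond Theorem~\ref{thm:main} itself.
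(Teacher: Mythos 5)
Your proposal is correct and follows the same route as the paper: the Example preceding the Corollary asserts exactly that the $H$-polynomial is factorizable with respect to multiplication because ``taking associated graded commutes with tensor product,'' and the Corollary is then an immediate instance of Theorem~\ref{thm:main}. You simply flesh out the commutative-algebra justifications (completion of a product, behavior under affine factors) that the paper leaves implicit.
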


     In \cite{Li.Yong}, the $H$-polynomial was studied for Schubert varieties, where it was
     conjectured $H_{vP,X_w}(q)\in {\mathbb N}[q]$. Corollary~\ref{Cor:Hfactors}
     extends the conjecture to Richardson varieties.

The consequence that
\begin{equation}
\label{eqn:multfactors}
{\rm mult}_{xP,X_{w}^v}={\rm mult}_{xP,X_w}\cdot {\rm mult}_{xP,X^v}
\end{equation}
also appears to be new. Previously, (\ref{eqn:multfactors})
    was proved for \emph{minuscule} $G/P$ in \cite[Remark 7.6.6]{Kreiman.Lakshmibai}
when $xP\in (G/P)^T$ (which is the case $\sigma=\tau$).
     This result was generalized
by M.~Balan \cite{Balan} who proved (\ref{eqn:multfactors}) for all points $xP$, again assuming that $P$ is minuscule.
(A different generalization in the minuscule setting appears in \cite[Remark~2.15]{Billey.Coskun}.)

In Section~2, we prove Theorem~\ref{thm:main}.  Our proof is
elementary, at least if one accepts standard algebraic groups
language.  It uses a variation of \cite[Lemma~A.4]{Kazhdan.Lusztig},
that relies on a variation on standard results about unipotent groups
found, for example, in~\cite[Section 28.1]{Humphreys}.  We also
include a variant of the proof that avoids the language of algebraic groups (valid only in type $A$,
but otherwise essentially the same as the general proof) in Section~3.
In that section we also discuss some further consequences of
Theorem~\ref{thm:main}.

\section{Proof of Theorem~\ref{thm:main}}

The projection map $\rho:G/B\to G/P$ is a fibration with fibers
locally isomorphic to affine space.  Thus, since our local invariant
is constant under product with affine space, its value at a point
$xP\in {\overline{BwP/P}}\cap {\overline{B_{-}vP/P}}$ is the same as
that for any point in $\rho^{-1}(xP)\subseteq {\overline{Bw^PB/B}}\cap
{\overline{Bv_P B/B}}$, where $w^P$ is the maximal length coset
representative of $w \in W/W_P$ and $v_P$ is the minimal length coset
representative of $v\in W/W_P$.  Therefore, it follows that to check
correctness of Theorem~\ref{thm:main}, we can assume $P=B$.

Let $X_{u}^{\circ}=BuB/B$ and $X^u_{\circ}=B_{-}uB/B$ be the Schubert and
opposite Schubert cells for $u$.
The proof depends on the following version of~\cite[Lemma~A.4]{Kazhdan.Lusztig}.

\begin{Lemma}
\label{lemma:key}
Given any $u\in W$, there exists an isomorphism
\[\eta=(\eta_1,\eta_2): uX_{\circ}^{{\rm id}} \rightarrow X_\circ^{u}
\times X^{\circ}_u\]
such that, for any $x\in uX_\circ^{\id}$, $\eta_1(x)$ and
$x$ are in the same Schubert cell, and $\eta_2(x)$ and $x$ are in the
same opposite Schubert cell.
\end{Lemma}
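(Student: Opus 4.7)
The plan is to construct $\eta$ from the two available factorizations of the unipotent group $uU_-u^{-1}$. Set $U^+_u := U \cap uU_-u^{-1}$ (the product of root subgroups $U_\alpha$ for positive roots $\alpha$ with $u^{-1}\alpha < 0$) and $U^-_u := U_- \cap uU_-u^{-1}$ (for negative roots $\beta$ with $u^{-1}\beta < 0$). By standard structure theory of unipotent subgroups in a reductive group (cf.\ \cite[\S 28]{Humphreys}), these are closed subgroups of $uU_-u^{-1}$, and both multiplication maps
\[
 U^+_u \times U^-_u \longrightarrow uU_-u^{-1}, \qquad U^-_u \times U^+_u \longrightarrow uU_-u^{-1}
\]
are isomorphisms of varieties. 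Elementary stabilizer computations identify $uX^{\id}_\circ = uU_-u^{-1}\cdot uB$, $X_\circ^u = U^-_u\cdot uB$, and $X^\circ_u = U^+_u\cdot uB$, with the respective unipotent actions free.

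To define $\eta$, let $x \in uX^{\id}_\circ$ and write $x = g\cdot uB$ for the unique $g \in uU_-u^{-1}$. The two factorizations yield
\[
 g = m_1 n_1 = n_2 m_2, \quad \text{with } m_i \in U^+_u,\ n_i \in U^-_u.
\]
Set $\eta_1(x) := n_1\cdot uB \in X_\circ^u$ and $\eta_2(x) := m_2\cdot uB \in X^\circ_u$. The identity $x = m_1\cdot(n_1 uB)$ together with $m_1 \in U \subseteq B$ places $x$ and $\eta_1(x)$ in a common $B$-orbit, hence in the same Schubert cell; symmetrically, $x = n_2\cdot(m_2 uB)$ with $n_2 \in U_- \subseteq B_-$ places $x$ and $\eta_2(x)$ in a common $B_-$-orbit, hence in the same opposite Schubert cell.

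The final step is to verify that $\eta = (\eta_1,\eta_2)$ is an isomorphism. It is certainly a $T$-equivariant morphism of affine varieties of equal dimension (since $T$ normalizes $U$, $U_-$, and hence $uU_-u^{-1}$, $U^\pm_u$). For bijectivity, given any $(n,m) \in U^-_u \times U^+_u$, factor the product $m n^{-1} \in uU_-u^{-1}$ uniquely as $n_2^{-1}\cdot m_1$ with $n_2^{-1} \in U^-_u$, $m_1 \in U^+_u$; then $g := m_1 n = n_2 m$ is forced, and hence so is $x = g\cdot uB$, giving the unique preimage of $(n \cdot uB,\, m \cdot uB)$.

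The main conceptual obstacle I anticipate is precisely the non-commutativity of $U^+_u$ and $U^-_u$ inside $uU_-u^{-1}$: neither factorization of $uU_-u^{-1}$ alone provides a map whose first coordinate respects Schubert cells \emph{and} whose second respects opposite Schubert cells. The resolution is to extract $\eta_1$ from the factorization that puts $U^+_u$ on the outside (so the discarded factor sits in $B$) and $\eta_2$ from the one that puts $U^-_u$ on the outside (so the discarded factor sits in $B_-$); the nontrivial check is then that the two halves, pulled from different factorizations, still assemble into a single bijection, which is exactly what the last step accomplishes.
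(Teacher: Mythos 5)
Your map $\eta$ is exactly the one the paper constructs: writing $g=m_1n_1=n_2m_2$ and keeping $n_1$ from one factorization but $m_2$ from the other is precisely the paper's $\sigma=\kappa_1^{-1}\times\pi_2^{-1}$ applied with $U_1=U^-_u$ and $U_2=U^+_u$, and your verification of the two cell-membership properties is the same as theirs. Where you genuinely diverge is on the one real difficulty, namely that this ``mixed'' map is invertible: the paper proves bijectivity by induction on $\dim U$ using the center of $U$ and a five-lemma-style diagram chase, and then separately upgrades bijectivity to isomorphism by checking injectivity of the differential (at the identity, then everywhere by $T$-equivariance). You instead solve $m_1n=n_2m$ directly: the unique factorization of $mn^{-1}$ in $U^-_uU^+_u$ forces $(n_2,m_1)$ and hence $g$, which is shorter, avoids the induction entirely, and hands you the inverse explicitly as $g=\pi_2^{-1}(mn^{-1})\cdot n$ (in the evident notation for the second component of the inverse of the product map $U^-_u\times U^+_u\to uU_-u^{-1}$). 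One point you should make explicit: as written you only claim ``bijectivity'' of a morphism between varieties of equal dimension, and that alone does not give an isomorphism (Frobenius, cuspidal normalizations; this is exactly the issue the paper's tangent-space paragraph exists to address). Your construction does close this gap, because the preimage formula is visibly a composite of morphisms once one knows that both product maps $U^-_u\times U^+_u\to uU_-u^{-1}$ and $U^+_u\times U^-_u\to uU_-u^{-1}$ are isomorphisms of varieties --- bijective by the direct-spanning result you cite, and isomorphisms because $\mathrm{Lie}(uU_-u^{-1})$ splits as the direct sum of the two Lie subalgebras, the same input the paper invokes --- so say so, and the proof is complete.
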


The result \cite[Lemma~A.4]{Kazhdan.Lusztig}
states that there is such a $T$-equivariant
isomorphism but does not state the additional properties about it
asserted in Lemma~\ref{lemma:key}.

Our proof of Lemma~\ref{lemma:key} depends on the following lemma
about algebraic groups.

\begin{Lemma}
\label{lemma:unipotent}
Let $U$ be a unipotent group on which $T$ acts without nontrivial fixed points, while $\mathfrak{u}=Lie(U)$ is the direct sum of $1$-dimensional eigenspaces for $T$ corresponding to characters $\alpha$ with distinct connected kernels $T_\alpha$.  Furthermore, let
$U_1$ and $U_2$ be $T$-stable subgroups of $U$ such that $U_1\cap
U_2=\{\id\}$, $U_1U_2=U$, and the corresponding Lie algebras satisfy
$\mathfrak{u}=\mathfrak{u_1}\oplus\mathfrak{u_2}$.  Then there exists
an isomorphism of varieties
$$\sigma: U \rightarrow U_1\times U_2$$ such that the factors $\sigma_1$ and $\sigma_2$ satisfy $\sigma_1(u)\in U_2u$ and $\sigma_2(u)\in U_1u$.
\end{Lemma}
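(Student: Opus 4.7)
My plan is to construct $\sigma$ from the two multiplication maps $\mu: U_1 \times U_2 \to U$ and $\mu': U_2 \times U_1 \to U$, $(a,b)\mapsto ab$, each of which I would first show is an isomorphism of varieties.

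Both $\mu$ and $\mu'$ are bijective: surjectivity of $\mu$ is the hypothesis $U_1 U_2 = U$ (with $U_2 U_1 = U$ following by taking inverses), and injectivity comes from $U_1 \cap U_2 = \{\id\}$. To upgrade to an isomorphism of varieties, I would induct on $\dim U$. In the inductive step, since $\mathfrak{u}$ is nilpotent, its center $Z(\mathfrak{u})$ is nonzero; being $T$-stable, it must be a direct sum of some of the one-dimensional weight spaces, so contains some $\mathfrak{u}_\alpha$. The corresponding one-parameter subgroup $U_\alpha \subseteq U$ is then central, hence normal, and $T$-stable. Because $\mathfrak{u}_\alpha$ is one-dimensional and both $\mathfrak{u}_1, \mathfrak{u}_2$ are $T$-stable, $\mathfrak{u}_\alpha$ lies entirely in one summand; WLOG $U_\alpha \subseteq U_1$. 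Passing to the quotient $\bar U = U/U_\alpha$ with subgroups $\bar U_1 = U_1/U_\alpha$ and $\bar U_2$ (the isomorphic image of $U_2$, since $U_2 \cap U_\alpha \subseteq U_2 \cap U_1 = \{\id\}$) preserves all the hypotheses, so by induction the analogous multiplication map for $\bar U$ is an isomorphism. One then lifts the inverse back using that the projection $U \to \bar U$ is a principal $U_\alpha$-bundle over a variety isomorphic to affine space (such $\mathbb{A}^1$-torsors are Zariski-trivial), and similarly for $U_1 \to \bar U_1$.

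With $\mu, \mu'$ isomorphisms, I would define $\sigma_1(u)$ to be the $U_1$-component of $u$ in the unique factorization $u = w_2 w_1$ with $w_2 \in U_2, w_1 \in U_1$ (so $\sigma_1 = \mathrm{pr}_{U_1} \circ (\mu')^{-1}$), and $\sigma_2(u)$ the $U_2$-component of $u$ in $u = v_1 v_2$ (so $\sigma_2 = \mathrm{pr}_{U_2} \circ \mu^{-1}$). Both are morphisms, and $\sigma_1(u) = w_2^{-1} u \in U_2 u$ while $\sigma_2(u) = v_1^{-1} u \in U_1 u$, so the coset conditions hold. The combined map $\sigma = (\sigma_1, \sigma_2): U \to U_1 \times U_2$ is an isomorphism: injectivity follows because $\sigma(u) = \sigma(u')$ forces $u(u')^{-1}$ to lie in both $U_2$ (from the $\sigma_1$ equation) and $U_1$ (from the $\sigma_2$ equation), hence to be trivial; surjectivity and morphism-ness of $\sigma^{-1}$ follow from the explicit formula $\sigma^{-1}(w_1, v_2) = a_2 w_1$, where $v_2 w_1^{-1} = a_1 a_2$ is the unique $U_1 U_2$ factorization supplied by $\mu^{-1}$.

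The main obstacle is the inductive step showing $\mu$ is an isomorphism of varieties — in particular, lifting the scheme-theoretic inverse from $\bar U$ back through the $U_\alpha$-torsor $U \to \bar U$. Once this is in place, the rest of the construction is essentially formal.
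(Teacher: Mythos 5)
Your construction of $\sigma$ is the same as the paper's (take the $U_1$-component from the $U_2U_1$ factorization and the $U_2$-component from the $U_1U_2$ factorization), but your proof that this mixed map is an isomorphism is genuinely different and substantially shorter. The paper proves it by induction on $\dim U$: it passes to the (nontrivial) center $U'$ of $U$, runs a five-lemma-style diagram chase to get bijectivity, and then uses $T$-equivariance plus the tangent-space computation at the identity to see the differential is everywhere injective, invoking the local isomorphism criterion. You instead exhibit an explicit regular two-sided inverse, $\tau(w_1,v_2)=a_2w_1$ where $v_2w_1^{-1}=a_1a_2$ is the $U_1U_2$ factorization; one checks directly that $a_2w_1=a_1^{-1}v_2$, so $\sigma(\tau(w_1,v_2))=(w_1,v_2)$, and your injectivity argument ($\sigma(u)=\sigma(u')$ forces $u(u')^{-1}\in U_1\cap U_2=\{\id\}$) closes the loop. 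This formula is correct and eliminates the induction, the diagram chase, and the differential argument entirely --- it is arguably how the lemma should be proved. The trade-off is in the preliminary step that $\mu$ and $\mu'$ are isomorphisms of varieties: the paper gets this in one line from the standard bijectivity fact in Humphreys \S 28.1 plus the everywhere-injective differential (which is exactly where the hypothesis $\mathfrak{u}=\mathfrak{u}_1\oplus\mathfrak{u}_2$ enters), whereas your torsor induction is more laborious and must still invoke the Lie algebra and distinct-connected-kernel hypotheses somewhere --- in positive characteristic a bijective product map need not be an isomorphism (e.g.\ $U_2=\{(t,t^p)\}\subset \mathbb{G}_a^2$), and your quotient step needs $U_2\to \bar U_2$ to be an isomorphism of varieties, not merely of abstract groups. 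I would recommend replacing your induction for that step with the paper's differential criterion, and keeping your explicit inverse for the main step.
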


Note that $\sigma$ will not be a group homomorphism unless $U_1$ and
$U_2$ commute.

\begin{proof}
It is a standard fact (see for example \cite[Sect. 28.1]{Humphreys})
that the product morphisms $\pi: U_1\times U_2\rightarrow U$ and
$\kappa: U_1 \times U_2\rightarrow U$ given by $\pi(u_1,u_2)=u_1u_2$
and $\kappa(u_1,u_2)=u_2u_1$ are bijective morphisms of varieties. The hypothesis
on the Lie algebras implies they are actually isomorphisms (of
varieties) since the differential is everywhere injective; see the
local isomorphism criterion, for example in~\cite[Theorem~14.9]{Harris}.
However, the inverses of these two maps each only satisfy one of our two
requirements that $\sigma_1(u)\in U_2u$ and $\sigma_2(u)\in U_1u$.

Let $\pi^{-1}_2$ and $\kappa^{-1}_1$ be respectively the inverses of
$\pi$ and $\kappa$ followed by projection onto respectively the second
and first factors.  We now have a map $\sigma=\kappa^{-1}_1\times\pi^{-1}_2:
U\rightarrow U_1\times U_2$.  It remains to prove that
$\kappa^{-1}_1\times\pi^{-1}_2$ is an isomorphism (of schemes), as
once that is done, $\sigma$ is our desired map, as follows.  Since
$\sigma_1=\kappa^{-1}_1$, so $\sigma_1(u)\in U_2u$, and since
$\sigma_2=\pi^{-1}_2$, so $\sigma_2(u)\in U_1u$.

We prove $\kappa^{-1}_1\times\pi^{-1}_2$ is an isomorphism by
induction on the dimension of $U$.  Since $U$ is unipotent, it is
nilpotent and thus has a nontrivial center $U^\prime$
\cite[Lemma~17.4(c)]{Humphreys}.  This center is clearly $T$-stable.
Therefore $U_1^\prime=U_1\cap U^\prime$ and $U_2^\prime=U_2\cap
U^\prime$ are $T$-stable subgroups of $U'$.

As $U^\prime$, $U_1^{\prime}$ and $U_2^{\prime}$ are unipotent and $T$-stable,
they are spanned by products of root subgroups $U_{\alpha}$
\cite[Proposition~28.1]{Humphreys}. These root subgroups are a subset of those
spanning $U$, $U_1$ and $U_2$ respectively.
Hence it follows $U_1^\prime
U_2^\prime=U^\prime$.  Therefore, $\pi$ and $\kappa$ restrict to
isomorphisms ${\widetilde \pi},{\widetilde \kappa}:U_1^\prime\times U_2^\prime\rightarrow U^\prime$.  Since
$U_1^\prime$ and $U_2^\prime$ commute, $\widetilde{\kappa}^{-1}_1\times\widetilde{\pi}^{-1}_2$
is actually the same as both $\widetilde{\kappa}^{-1}$ and $\widetilde{\pi}^{-1}$ on $U^\prime$
and hence invertible.

Furthermore, because $U^\prime$ is central, $\pi$ and $\kappa$ induce
maps $$\overline{\pi}, \overline{\kappa}: U_1/U_1^\prime \times
U_2/U_2^\prime \rightarrow U/U^\prime.$$ (It is straightforward to
check the maps are well-defined.)  By induction, the
map $$\overline{\kappa}^{-1}_1\times\overline{\pi}^{-1}_2:U/U^\prime
\rightarrow U_1/U_1^\prime\times U_2/U_2^\prime$$ is an isomorphism.
(Technically, the induction applies to the natural embeddings, which
are isomorphisms onto their images, of $U_1/U_1^\prime$ and
$U_2/U_2^\prime$ in $U/U^\prime$.)

Summarizing, we have maps in a commutative diagram:
\[\begin{CD}
0   @>>> U' @>>> U @>>> U/U' @>>> 0\\
@. @VV{{\widetilde{\kappa}}^{-1}_1\times{\widetilde{\pi}}^{-1}_2}V     @VV{\sigma=\kappa_1^{-1}\times \pi_2^{-1}}V @VV{\overline{\kappa}^{-1}_1\times\overline{\pi}^{-1}_2}V \\
0 @>>> U_1'\times U_2' @>>> U_1\times U_2 @>>> U_1/U_1' \times U_2/U_2' @>>> 0
\end{CD}\]
It is easy to check from the definitions that $\sigma$ not only restricts to a group homomorphism on $U'$ but moreover that $\sigma(ab)=\sigma(a)\sigma(b)$
whenever $a$ or $b$ is in $U'$. Using this, a straightforward diagram chase similar to the proof
of the five-lemma implies $\sigma$ is a bijection. 

By our hypotheses, the equality of tangent spaces
$T_{p}(U)={\mathfrak u}={\mathfrak u}_1\oplus {\mathfrak u}_2=T_{(a,b)}(U_1\times U_2)$
holds when $p,a,b$ are the identity. Hence $d\sigma$ is an injection
at the identity.  However, injectivity holds everywhere else since
$\sigma$ is $T$-equivariant, the identity is the only $T$-fixed point,
and failure for the differential to be injective is a closed
condition.
Therefore $\sigma$ is an isomorphism of schemes, by another use of
the local criterion for isomorphism.
\end{proof}

\noindent
\emph{Proof of Lemma~\ref{lemma:key}:} Let $U_+\subset B$ and $U_-\subset B_-$ respectively denote the
unipotent subgroups of the Borel and opposite Borel subgroups of $G$.  Given $u\in W$, we
use Lemma~\ref{lemma:unipotent} in the case where $U=uU_-u^{-1}$,
$U_1=U_-\cap uU_-u^{-1}$, and $U_2=U_+\cap uU_-u^{-1}$.
Lemma~\ref{lemma:unipotent} gives us an isomorphism
$$\sigma: U\rightarrow U_1\times U_2.$$ such that $\sigma_1(u)$ and
$u$ differ by left multiplication by an element of $U_2$ and
$\sigma_2(u)$ and $u$ differ by left multiplication by an element of
$U_1$.

Now note that we have isomorphisms $m_0: U\rightarrow uX_{\circ}^{\id}$, $m_1: U_1\rightarrow
X_\circ^{u}$, and $m_2:U_2\rightarrow X^\circ_u$ each defined by $m_i(g)=guB$.  Let $\eta=(m_1\times m_2) \circ \sigma \circ m_0^{-1}$.
Since by Lemma~\ref{lemma:unipotent}, $\sigma_1(g)\in U_2g$ for any $g\in U$ and $U_2\subseteq U_+\subset B$, $\eta_1(guB)\in BguB$ and hence this point is in the same
 Schubert cell as $guB$. Similarly since $U_1\subset B_-$, the points $\eta_2(guB)$ and $guB$ are in the same opposite Schubert cell.
\qed

\noindent
\emph{Proof of Theorem~\ref{thm:main}:}
Let $xB\in G/B$.  Since the charts $\{uX_{\circ}^{\id}\}_{u\in W}$ cover $G/B$, we
can fix $u\in W$ such that $xB\in uX_\circ^{\id}$.  Also let $\eta$ be the map from Lemma~\ref{lemma:key}.

We have the Bruhat and opposite Bruhat decompositions
\begin{equation}
\label{eqn:Bruhat}
X_{w}=\coprod_{\sigma\leq w}X_{\sigma}^{\circ} \mbox{\ \ and \ \ }
X^v=\coprod_{\tau\geq v}X^{\tau}_{\circ}.
\end{equation}
Therefore by (\ref{eqn:Bruhat}), $xB\in X^v_w$ if and only if $xB\in
X^\circ_\sigma$ for some fixed $\sigma\leq w$ and $x\in X_\circ^\tau$
for some fixed $\tau\geq v$.  By Lemma~\ref{lemma:key} we know
$\eta_1(xB)$ and $xB$ are in the same Schubert cell, and we also know
$\eta_2(xB)$ and $xB$ are in the same opposite Schubert
cell. Therefore, $xB\in X^v_w$ if and only if $\eta_1(xB)\in X_w$ and
$\eta_2(xB)\in X^v$.

Therefore, $\eta$ restricts to an isomorphism
\[\eta\mid_{X^v_w}: uX_\circ^{\id}\cap X^v_w \rightarrow (X_\circ^u\cap
X_w) \times (X^\circ_u\cap X^v).\] 
This isomorphism is
scheme-theoretic since both sides are reduced.  (The righthand side is reduced since it is
a product of two integral schemes over their base field and thus integral.)

Now suppose $xB\in X^v_w$.  Since ${\mathcal P}$ is a local invariant,
\[{\mathcal P}(xB,X^v_w)={\mathcal P}(xB,uX_\circ^{\id}\cap X_{w}^{v}).\]
Since $\eta$ is an isomorphism,
\[{\mathcal P}(xB,uX_\circ^{\id}\cap X_{w}^v)={\mathcal P}(\eta(xB),(X_\circ^u\cap X_w)\times (X^\circ_u\cap X^v)).\]
By the hypothesis that ${\mathcal P}$ factorizes with respect to $f_{\mathcal P}$,
\[{\mathcal P}(\eta(xB),(X_\circ^u\cap X_w)\times (X^\circ_u\cap X^v))= f_{\mathcal P}({\mathcal P}(\eta_1(xB),X_\circ^u\cap
X_w),{\mathcal P}(\eta_2(xB),X^\circ_u\cap X^v)).\]
If $xB\in X^\circ_\sigma\cap
X_\circ^\tau$, then by Lemma~\ref{lemma:key} we know $\eta_1(xB)\in X^\circ_\sigma$ and $\eta_2(xB)\in
X_\circ^\tau$.

Since in a Schubert variety every point is locally isomorphic (by the action of $B$)
to the $T$-fixed point in its Schubert cell, we have:
\[{\mathcal P}(\eta_1(xB),X_\circ^u\cap
X_w)={\mathcal P}(\eta_1(xB), X_w)={\mathcal P}(\sigma B,X_w).\]
The first equality is uses \cite[Lemma~A.4]{Kazhdan.Lusztig} (of which we have just proved a stronger version), which states that $(X_{\circ}^u\cap X_w)\times \Bbbk^{\ell(u)}\cong uX_\circ^{\id}\cap X_w$.
We are also using the assumption that ${\mathcal P}$ is invariant under Cartesian product with affine space.

The same arguments apply to the opposite Schubert varieties (using the action of $B_{-}$); hence:
\[{\mathcal P}(\eta_2(xB),X^\circ_u\cap
X^v)={\mathcal P}(\eta_2(xB), X^v)={\mathcal P}(\tau B, X^v).\]
Combining the above we obtain
\[{\mathcal P}(xB,X^v_w)=f_{\mathcal P}({\mathcal P}(\sigma B,X_w),{\mathcal P}(\tau B, X^v)),\]
as desired.\qed

\section{Further consequences and comments}

\subsection{Other singularity invariants}
For brevity, we refer the reader to \cite{WYII} for discussion of
properties of Schubert varieties of interest to us. The purpose of
this section is to explain the extension of this discussion to
Richardson varieties.

There are two kinds of invariants ${\mathcal P}$ we are interested in. The first is of the ``yes/no'' kind. As in Corollary~\ref{cor:Pissmooth}, they are factorizable by $f_{\mathcal P}$=``logical {\tt and}''. If ${\rm non{\mathcal P}locus}(X)$ is the set of points in $X$ where ${\mathcal P}$ takes on the value ``no'', then we have
\begin{equation}
\label{eqn:generalyesno}
{\rm non{\mathcal P}locus}(X_{w}^v)=({\rm non{\mathcal P}locus}(X_w)\cap X^v)\cup (X_w\cap {\rm non{\mathcal P}locus}(X^v)).
\end{equation}

In \cite{WYI} we determined which Schubert varieties are Gorenstein. The property ${\mathcal P}=$``Gorenstein''
is factorizable with respect to $f_{\mathcal P}=$``logical {\tt and}'' since being Gorenstein is a homological
property (and the total complex of the double complex formed by tensoring two free resolutions is exact in this case). Thus, we record:

\begin{Corollary}
\label{Cor:Gor}
(\ref{eqn:generalyesno}) holds for ${\mathcal P}$=``is Gorenstein''.
\end{Corollary}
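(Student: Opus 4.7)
The plan is to apply Theorem~\ref{thm:main} with ${\mathcal P}=$``is Gorenstein'' and $f_{\mathcal P}$ equal to logical {\tt and}. This reduces the corollary to checking the two hypotheses of the theorem for this ${\mathcal P}$: preservation under product with affine space, and the factorization property~(\ref{eqn:factor}).

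Invariance under product with affine space is immediate, because the local ring at $(x,0)\in X\times\Bbbk^n$ is a localization of ${\mathcal O}_{x,X}[t_1,\ldots,t_n]$, and a Noetherian local ring is Gorenstein if and only if the corresponding localization of its polynomial extension is. For the factorization property, the cleanest approach is via dualizing modules: a variety is Gorenstein at a point if and only if its dualizing sheaf $\omega$ is locally free of rank one there. The K\"unneth-type identity $\omega_{X\times Y}=\omega_X\boxtimes\omega_Y$, which is precisely the paper's remark that the total complex of a tensor product of free resolutions (of the dualizing modules) is again a free resolution, then shows that $\omega_{X\times Y}$ is locally free of rank one at $(x,y)$ if and only if $\omega_X$ is locally free of rank one at $x$ and $\omega_Y$ is locally free of rank one at $y$. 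Equivalently, the tensor product over $\Bbbk$ of two essentially-finite-type local $\Bbbk$-algebras is Gorenstein if and only if both factors are, a classical fact in commutative algebra (see, e.g., Bruns--Herzog).

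With both hypotheses verified, Theorem~\ref{thm:main} yields, for every $xP\in X_w^v$, that $X_w^v$ is Gorenstein at $xP$ if and only if both $X_w$ and $X^v$ are Gorenstein at $xP$. Taking complements of the Gorenstein locus within $X_w^v$ immediately yields~(\ref{eqn:generalyesno}) for ${\mathcal P}=$``is Gorenstein''. The main (and indeed the only) obstacle is the commutative-algebra verification that Gorensteinness factors over tensor products over the ground field; once that is granted, the remainder is a direct application of Theorem~\ref{thm:main}.
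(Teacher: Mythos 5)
Your proposal is correct and follows essentially the same route as the paper: both reduce the corollary to Theorem~\ref{thm:main} with $f_{\mathcal P}=$``logical {\tt and}'' and verify factorizability of Gorensteinness by a homological argument about tensor products over the ground field (your dualizing-module/K\"unneth formulation is just a more explicit version of the paper's remark about tensoring free resolutions). The additional detail you supply --- invariance under product with affine space and the passage from the pointwise statement to the locus identity~(\ref{eqn:generalyesno}) --- is accurate and consistent with how the paper treats all of its ``yes/no'' invariants.
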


In \cite{WYI} and \cite{WYII} we conjecture a description of
the non-Gorenstein locus of any Schubert variety. This conjecture
was stated using interval pattern avoidance in the case of $GL_n/B$, furthermore, the conjecture was made explicit and
proved for minuscule $G/P$ by N.~Perrin \cite{Perrin}. In such cases, one can combine these combinatorial descriptions with Corollary~\ref{Cor:Gor}
to obtain descriptions of ${\rm nonGorlocus}(X_{w}^v)$.
Recently, C.~Darayon \cite{Darayon} determined which Richardson varieties in
Grassmannians are \emph{arithmetically} Gorenstein under the standard embedding.
(Theorem~\ref{thm:main} does not apply to the cone singularity.)

Unfortunately, even Corollary~\ref{Cor:Gor} combined with the results in \cite{WYI} does not provide a characterization for when $X_{w}^v\subseteq GL_n/B$
is Gorenstein. One seems to need a (compatible) characterization for the non-Gorenstein locus of a Schubert variety in general.

A stronger property that Gorensteinness is that of ${\mathcal P}$=``is a local complete intersection (lci)''. This ${\mathcal P}$ factorizes with respect to $f_{\mathcal P}=$``logical {\tt and}'', and thus (\ref{eqn:generalyesno}) again applies.
 Recently, a characterization of which $X_w\subseteq GL_n/B$ are local complete intersections has been determined by H.~\'{U}lfarsson and the second author \cite{UlfWoo}.  To further determine when $X^v_w$ is a local complete intersection, one also seems to need a characterization of the 
lci locus of a Schubert variety.

    The second type of invariant we are interested in takes values in ${\mathbb Z}$
    or some (Laurent) polynomial ring. Suppose ${\mathcal P}$ is factorizable with respect
to $f_{\mathcal P}=$``usual multiplication of numbers or (Laurent) polynomials''. If we
let ${\mathcal P}_{x,X}$ denote the value taken at $x\in X$, then just as in Corollary~\ref{Cor:Hfactors}, we have
\begin{equation}
\label{eqn:numfactor}
{\mathcal P}_{xP,X_{w}^v}={\mathcal P}_{xP,X_w}\cdot {\mathcal P}_{xP,X^v}.
\end{equation}
 for any $xP\in X_{w}^v$.

More refined than knowing if a point $x\in X$ is Gorenstein is to know the \emph{Cohen-Macaulay type} of that point.
Let ${\mathcal P}_{x,X}=CM_{x,X}\in {\mathbb N}$ denote this statistic. (When $CM_{x,X}=1$ then $x$ is a Gorenstein point, assuming $X$ is Cohen-Macaulay, which
is always true in our situation.) Again, since ${\mathcal P}$ is a homological property, it factorizes with respect to $f_{\mathcal P}=$``(usual multiplication)''. Thus we obtain another example of (\ref{eqn:numfactor}):

     \begin{Corollary}
     \label{Cor:CMfactors}
     (\ref{eqn:numfactor}) holds for ${\mathcal P}=$``Cohen-Macaulay type''.
     \end{Corollary}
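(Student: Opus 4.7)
The plan is to verify that Cohen--Macaulay type is factorizable with respect to ordinary multiplication, and then invoke Theorem~\ref{thm:main} directly, exactly as is done for the other corollaries in this section. Since the Corollary is a straightforward consequence once factorizability is established, essentially all of the work lies in verifying the factorization property at the level of local commutative algebra.

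Recall that for a Cohen--Macaulay local ring $(R,\mathfrak{m},k)$ of Krull dimension $d$, the Cohen--Macaulay type is $CM(R) = \dim_k \operatorname{Ext}^d_R(k,R)$. First, I would check that if $(R,\mathfrak{m}_R,k)$ and $(S,\mathfrak{m}_S,k)$ are local Cohen--Macaulay $k$-algebras of dimensions $d$ and $e$, then the local ring $T$ obtained by localizing $R \otimes_k S$ at the maximal ideal generated by $\mathfrak{m}_R$ and $\mathfrak{m}_S$ is Cohen--Macaulay of dimension $d+e$, with
\[ CM(T) = CM(R)\cdot CM(S). \]
The argument is the standard one: take minimal free resolutions $F_\bullet \to k$ over $R$ and $G_\bullet \to k$ over $S$; then $F_\bullet \otimes_k G_\bullet$ is a free resolution of $k$ over $R \otimes_k S$, and minimality is preserved after localization to $T$ (the differentials have entries in the maximal ideal). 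Applying $\operatorname{Hom}(-,T)$ and using the vanishing $\operatorname{Ext}^i_R(k,R) = 0$ for $i<d$ (and likewise for $S$) from Cohen--Macaulayness, a K\"unneth argument yields
\[ \operatorname{Ext}^{d+e}_T(k,T) \;\cong\; \operatorname{Ext}^d_R(k,R) \otimes_k \operatorname{Ext}^e_S(k,S), \]
and taking $k$-dimensions gives the multiplicative formula.

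Second, I would observe that for varieties $X,Y$ over $\Bbbk$ with points $x \in X$, $y \in Y$, the local ring $\mathcal{O}_{(x,y),X\times Y}$ is precisely the localization described above of $\mathcal{O}_{x,X} \otimes_\Bbbk \mathcal{O}_{y,Y}$. Since $CM$ is an invariant of the local ring, the step above immediately gives
\[ CM_{(x,y),X\times Y} = CM_{x,X} \cdot CM_{y,Y}, \]
so $\mathcal{P} = $ ``Cohen--Macaulay type'' is factorizable with respect to $f_\mathcal{P} = $ multiplication. (One should also note that $CM$ is invariant under product with affine space, since tensoring a minimal resolution with a Koszul complex on a regular sequence changes $CM$ by a factor of $1$.) Theorem~\ref{thm:main} then yields Corollary~\ref{Cor:CMfactors}.

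The main obstacle, such as it is, is the careful bookkeeping in the K\"unneth-style computation, particularly ensuring that minimality of the tensor-product resolution survives localization and that the vanishing of lower Ext groups kills the cross-terms so only the top Ext contributes. Everything else is formal once factorizability is in hand; both the hypothesis on $X_w^v$ being Cohen--Macaulay (already established earlier in this section) and the application of Theorem~\ref{thm:main} are immediate.
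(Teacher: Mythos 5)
Your proposal is correct and follows the same route as the paper: verify that Cohen--Macaulay type is factorizable under ordinary multiplication and then apply Theorem~\ref{thm:main} (via (\ref{eqn:numfactor})). The paper merely asserts factorizability on the grounds that $CM_{x,X}$ is ``a homological property'' computed from a tensor product of free resolutions; your K\"unneth computation of $\operatorname{Ext}^{d+e}(k,\cdot)$, with the lower Ext vanishing killing the cross-terms, is exactly the detail being elided there.
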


Let $P_{v,w}(q)\in {\mathbb N}[q]$ denote a Kazhdan-Lusztig
polynomial. Owing to its interpretation as a Poincar\'{e} polynomial
for local intersection cohomology at $vP$ in $X_w$ (together with a
Kunneth-type formula for local intersection cohomology), one can apply
(\ref{eqn:numfactor}). Work of L.~Li and the second author
\cite{Li.Yong} suggests an analogy between the polynomials $P_{v,w}(q)$ and
$H_{vP,X_w}(q)$. In particular, for Grassmannians and covexillary
Schubert varieties, it is shown that $P_{v,w}(q)\preceq
H_{vP,X_w}(q)$, where $\preceq$ means ``coefficient-wise
$\leq$''. This therefore lifts to Richardson varieties.

B.~Wyser \cite{Wyser} has recently proved that certain $K$-orbits (for
example, certain $K=GL_p\times GL_q$ orbits in $GL_n/B$) are
isomorphic to Richardson varieties. Therefore our results apply to
those $K$-orbits.  Further discussion may appear elsewhere.



\subsection{Another proof of Lemma~\ref{lemma:key}, when in Type~$A$}
Assume $x$ is the generic matrix in $uX_{\circ}^{\rm id}$: the
rows are a generic matrix in $U_{-}$, but are $u$-permuted.
Let $z_{ij}$ be the entry in row $i$ and column $j$. Note $u(i)=j$ means there is
a $1$ in column $i$ and row $j$ of the matrix. See Example~\ref{exa:31542}.

Define $\eta_1(x)$ to be
obtained by ``sweeping up using the $1$'s in each column'': take the
column $j_1$
whose ``$1$'' (coming from $u$) is in row $n$ (so $j_1=u^{-1}(n)$) and do upward row operations so that all entries
in column $j_1$ strictly 
above row $n$ are $0$. Then find the column $j_2$ whose ``$1$'' is
in row $n-1$ and do operations that place $0$ above that ``$1$'' etc. Declare the result
after sweeping in all $n$ columns to be $\eta_1(x)$. (The resulting matrix can be obtained
by any sequence of row operations that put $0$'s above each ``$1$''; however
the stated order is most efficient, as no column has to be swept more than once.)
Clearly $\eta_1(x)$ is in $BuB/B$ and is in the same Schubert cell as $x$ since the former
comes from the latter by left multiplication of a matrix from $B$.

Similarly, define $\eta_2(x)\in B_{-}uB/B$ by downward sweeping operations,
starting with the column whose ``$1$'' (from $u$) appears in row $1$, etc. Likewise,
$\eta_2(x)$ and $x$ are in the same opposite Schubert cell.

Let
\[D^{\rm up}=\{(i,j)\in [n]\times [n]: i>\pi(j) \mbox{\ and \ } j<\pi^{-1}(i)\}\]
\[D^{\rm down}=\{(i,j)\in [n]\times [n]: i<\pi(j) \mbox{ \ and \ } j<\pi^{-1}(i)\}.\]
These are the positions in $\eta_1(x)$ and $\eta_2(x)$ respectively not (\emph{a priori}) equal to $0$ or $1$.

\begin{Claim}
\label{claim:A}
(i) In $\eta_1(x)$, the entry in position $(i,j)\in D^{\rm up}$ is an expression of the form $z_{ij}+f$ where
$f$ is a polynomial in the generic entries $z_{a,b}$ where either $b=j$ and $a>i$, or
$b>j$.

(ii) Similarly, in $\eta_2(x)$, the entry in position $(i,j)\in D^{\rm down}$ is an expression of the form $z_{ij}+g$ where
$g$ is a polynomial in the generic entries $z_{a,b}$ where either $b=j$ and $a<i$, or
$b>j$.
\end{Claim}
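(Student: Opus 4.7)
The plan is to prove (i) by tracking every entry of the matrix through the sweeping process and maintaining a structural invariant; part (ii) will follow from the same argument applied to the downward sweep. Introduce the column-then-row order on positions: $(a,b)\succ(i,j)$ iff $b>j$, or $b=j$ and $a>i$; the conclusion of (i) is then that $f$ is a polynomial in $\{z_{a,b}:(a,b)\succ(i,j)\}$. Write $M^{(k)}$ for the matrix after the $k$-th sweep, so $M^{(0)}=x$ and $M^{(n)}=\eta_1(x)$; let $r_k=\pi(j_k)=n-k+1$ be the pivot row at step $k$, and $S^{(k)}=\{j_1,\dots,j_k\}$ the columns swept through step $k$.

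The main step is to prove by induction on $k$ the following four-part invariant covering every position $(a,b)$: (A) if $a=\pi(b)$ then $M^{(k)}_{a,b}=1$; (B) if $a<\pi(b)$ and $b\in S^{(k)}$ then $M^{(k)}_{a,b}=0$; (C) if $a\ne\pi(b)$ and $b>\pi^{-1}(a)$ then $M^{(k)}_{a,b}=0$; (D) otherwise $M^{(k)}_{a,b}=z_{a,b}+g^{(k)}_{a,b}$, where $g^{(k)}_{a,b}$ is a polynomial in $\{z_{a',b'}:(a',b')\succ(a,b)\}$. At $k=n$ every column has been swept, so (D) reduces to the conjunction $a>\pi(b)$ and $b<\pi^{-1}(a)$, which is precisely $D^{\rm up}$; part (i) of Claim~\ref{claim:A} is then (D) at step $n$.

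The inductive step uses the update $M^{(k)}_{a,b}=M^{(k-1)}_{a,b}-M^{(k-1)}_{a,j_k}\cdot M^{(k-1)}_{r_k,b}$ for rows $a<r_k$; rows $a\ge r_k$ are not modified, so the invariant is inherited there immediately. For (A), (B), and (C) the task is to show the subtracted product vanishes, and a short case analysis on the relative order of $b$, $j_k$, and $\pi^{-1}(a)$ shows that in every sub-case one factor is either originally zero (case (C) from the previous step) or vanishes because its column was swept earlier with a pivot row larger than $r_k$ (case (B) from the previous step).

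The heart of the argument is case (D). I will show that the only sub-case in which neither factor vanishes forces $b<j_k<\pi^{-1}(a)$ together with $r_k>\pi(b)$, and that in this sub-case both $(a,j_k)$ and $(r_k,b)$ satisfy clause (D) at step $k-1$; so inductively $M^{(k-1)}_{a,j_k}=z_{a,j_k}+g^{(k-1)}_{a,j_k}$ and $M^{(k-1)}_{r_k,b}=z_{r_k,b}+g^{(k-1)}_{r_k,b}$ with the $g$'s polynomials in $z$'s $\succ(a,j_k)$ and $\succ(r_k,b)$ respectively. The bounds $j_k>b$ and $r_k>a$ then give the containments $\{(a',b')\succ(a,j_k)\}\cup\{(a',b')\succ(r_k,b)\}\subseteq\{(a',b')\succ(a,b)\}$, so the product lies in $\Bbbk[\{z_{a',b'}:(a',b')\succ(a,b)\}]$, and $g^{(k)}_{a,b}=g^{(k-1)}_{a,b}-M^{(k-1)}_{a,j_k}M^{(k-1)}_{r_k,b}$ retains the required form. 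The main obstacle is the case bookkeeping for (A)--(C); once that is handled, (D)---the only case carrying real information---follows mechanically. Part (ii) is obtained by running the same argument on the downward sweep, with $\succ$ replaced by $(a,b)\succ'(i,j)$ iff $b>j$, or $b=j$ and $a<i$.
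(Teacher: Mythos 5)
Your proof is correct and follows the same underlying mechanism as the paper's (much terser) argument: each row operation changes an entry $(a,b)$ by subtracting a product of an entry in the same column (in a row on the far side of $(a,b)$ from the swept region) and an entry in a strictly larger column, so the correction stays in the allowed set of variables by induction. Your explicit four-part invariant on the partially swept matrix is a more careful write-up of the same idea (the paper inducts on the column $j$ and handles part (ii), leaving (i) as the mirror image), and all the case checks go through.
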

\begin{proof}
The argument for (i) is the upside down version of (ii). Let us argue the latter case (where one does downward sweeping). The only $1$'s we use to sweep that affect some position
$(i,j)\in D^{\rm down}$ are those strictly northeast of $(i,j)$. Say this $1$ is in position $(i',j')$ (with $i'<i$ and $j'>j$). Then the row operation involving this $1$ converts the entry $y_{i,j}$ in position $(i,j)$ to $y_{ij}-y_{i',j}y_{i,j'}$,
and the result follows by induction on $j\leq n$.
\end{proof}



We now recover
all generic entries $z_{ij}$ in $x$ from the entries of $\eta_1(x)$ and $\eta_2(x)$.
Inductively, we argue that, if we know all such entries in columns $k,k+1,\ldots,n$ of $x$, we can
deduce the entries in column $k-1$ of $x$.

For the base case of the argument, note that column $n$ of $x$ consists of
only $0$'s and $1$'s, so there is nothing to do. In the induction step, the argument
depends on the relative positions of the $1$'s in columns $k-1$ and $k$:

\noindent
\emph{Case I -- The $1$ in column $k-1$ is higher (lower index) than the one
in column $k$:} Suppose $u(k-1)=b$ and $u(k)=c$ where $b<c$. First consider
positions $(q,k-1)$ for $q<b$. We use $\eta_2(x)$. Either position $(1,k-1)$
is $0$ in $x$ (because there is a $1$ to the left of column $k-1$ in the first row),
or $z_{1,k-1}$ appears in that position of $\eta_2(x)$ and hence is determined.
Then by Claim~\ref{claim:A}(ii) and induction, working top to bottom,
we determine each $z_{q,k-1}$ for $q<b$.

    Now consider indices $(r,k-1)$ for $r>b$. This time notice position $(n,k-1)$
    is either $0$ in $x$ or $z_{n,k-1}$ appears there in both $x$ and $\eta_1(x)$ and
    hence is determined. Then the remaining generic variables of $x$ in column $k-1$
    are obtained from $\eta_1(x)$ by working bottom to top, applying induction
    and Claim~\ref{claim:A}(i).

\noindent
\emph{Case II -- The $1$ in column $k-1$ is lower (higher index) than the one
in column $k$:} The argument is the upside down version of Case I, reversing
the roles of $\eta_1(x)$ and $\eta_2(x)$ in the obvious way.
\qed

Our second proof of Theorem~\ref{thm:main} (in type $A$)
now concludes as in Section~2.

\begin{Example}
\label{exa:31542}
Let $u=31542\in S_5$. A generic matrix in $uX_\circ^{\id}$ has a standard form
$$x=\begin{pmatrix}
z_{11} & 1 & 0 & 0 & 0 \\
z_{21} & z_{22} & z_{23} & z_{24} & 1 \\
1 & 0 & 0 & 0 & 0 \\
z_{41} & z_{42} & z_{43} & 1 & 0 \\
z_{51} & z_{52} & 1 & 0 & 0
\end{pmatrix}.$$

Then
\[\eta_1(x)=\begin{pmatrix}
0 & 1 & 0 & 0 & 0 \\
0 & z_{22}-z_{24}(z_{42}-z_{52}z_{43})-z_{23}z_{52} & 0 & 0 & 1 \\
1 & 0 & 0 & 0 & 0\\
z_{41}-z_{51}z_{43} & z_{42}-z_{52}z_{43} & 0 & 1 & 0\\
z_{51} & z_{52} & 1 & 0 & 0
\end{pmatrix}
\]
and
\[\eta_2(x)=\begin{pmatrix}
z_{11} & 1 & 0 & 0 & 0 \\
z_{21}-z_{11}z_{22} & 0 & z_{23} & z_{24} & 1 \\
1 & 0 & 0 & 0 & 0 \\
0 & 0 & z_{43} & 1 & 0 \\
0 & 0 & 1 & 0 & 0
\end{pmatrix}.\]

We show how to
recover the entries of $x$ as polynomials of the entries in
$\eta_1(x)$ and $\eta_2(x)$.  The variables $z_{51}$, $z_{52}$, $z_{43}$, $z_{23}$, $z_{24}$,
and $z_{11}$ appear in $\eta_1(x)$ or $\eta_2(x)$.  (These are the
variables which have no $0$'s between them and the $1$ to their right
in $\eta_1(x)$ or $\eta_2(x)$.)  Given these, we can recover $z_{41}$, $z_{42}$,
and $z_{22}$ from the entries of $\eta_1$ and $\eta_2$, since $z_{41}=(z_{41}-z_{51}z_{43})+z_{51}z_{43}$, $z_{42}=(z_{42}-z_{52}z_{43})+z_{52}z_{43}$, and
$z_{22}=(z_{22}-z_{24}(z_{42}-z_{52}z_{43})-z_{23}z_{52})
+z_{23}z_{52}+z_{24}(z_{42}-z_{52}z_{43})$.  Given these, we can recover $z_{21}$, since
$z_{21}=(z_{21}-z_{11}z_{22})+z_{11}z_{22}$.\qed
\end{Example}

\section*{Acknowledgements}
AK was supported by an NSF grant. AY was supported by an NSF grant; he also thanks UIUC's Center for Advanced Study, where he was a 
Beckman Fellow during the completion of this text.

\end{document}